\def\ra{\rightarrow}
\newtheorem{theorem}{THEOREM}[section]
\newtheorem{corollary}[theorem]{Corollary}
\newtheorem{proposition}[theorem]{Proposition}
\newtheorem{example}[theorem]{Example}
\theoremstyle{definition}
\theoremstyle{remark}
\newtheorem{remark}[theorem]{Remark}
\newcommand\CC{{\mathbb C}}
\newcommand\RR{{\mathbb R}}
\newcommand\ZZ{{\mathbb Z}}
\newcommand\NN{{\mathbb N}}
\def\GL{\mathop{\rm GL}\nolimits}
\def\SU{\mathop{\rm SU}\nolimits}
\def\U{\mathop{\rm U}\nolimits}
\def\Aut{\mathop{\rm Aut}\nolimits}
\def\blfootnote{\xdef\@thefnmark{}\@footnotetext}
\begin{document}

\title[Actions of  the unitary group on quotients of Hopf manifolds]{Effective transitive actions
\vspace{0.1cm}\\
of  the unitary group
\vspace{0.1cm}\\
on quotients of Hopf manifolds}\blfootnote{{\bf Mathematics Subject Classification:} 32M10, 32M05} \blfootnote{{\bf Keywords:} Hopf manifolds, transitive group actions. }
\author[Isaev]{Alexander Isaev}

\address{Mathematical Sciences Institute\\
Australian National University\\
Acton, ACT 2601, Australia}
\email{alexander.isaev@anu.edu.au}

\maketitle

\thispagestyle{empty}

\pagestyle{myheadings}

\begin{abstract} In article \cite{IKru} we showed that every connected complex manifold of dimension $n\ge 2$ that admits an effective transitive action by holomorphic transformations of the unitary group $\U_n$ is biholomorphic to the quotient of a Hopf manifold by the action of $\ZZ_m$ for some integer $m$ satisfying $(n,m)=1$. In this note, we complement the above result with an explicit description of all effective transitive actions of $\U_n$ on such quotients, which provides an answer to a 10-year old question.
\end{abstract}

\section{Introduction}\label{intro}
\setcounter{equation}{0}

In \cite{IKru} we classified all connected complex manifolds of dimension $n\ge 2$ that admit an effective action of the unitary group $\U_n$ by holomorphic transformations. In particular, in Theorem 4.5 of \cite{IKru} we showed that if the action of $\U_n$ is transitive, then the manifold in question is biholomorphic to a certain quotient of a Hopf manifold. Recall that for a nonzero complex number $d$ with $|d|\ne 1$ the corresponding $n$-dimensional Hopf manifold, say $M_d^n$, is constructed by identifying every point $z\in\CC^n\setminus\{0\}$ with the point $d\cdot z$ (hence with each of the points $d^k\cdot z$ for $k\in\ZZ$). Clearly, $M_d^n$ is compact. Further, letting $[z]\in M_d^n$ be the equivalence class of $z\in\CC^n\setminus\{0\}$, for $m\in \NN$ we denote by $M_d^n/\ZZ_m$ the compact complex manifold obtained from $M_d^n$ by identifying $[z]$ with $[e^{\frac{2\pi i}{m}}z]$  (hence with each of the points $[e^{\frac{2\pi iK}{m}}z]$ for $K=0,\dots,m-1$). Theorem 4.5 of \cite{IKru} then asserts that if the group $\U_n$ acts on a connected complex manifold $X$ of dimension $n\ge 2$ effectively and transitively by holomorphic transformations, then $X$ is biholomorphic to $M_d^n/\ZZ_m$ for some $m\in\NN$ satisfying $(n,m)=1$. 

Conversely, as the following example shows (cf.~\cite[Example 4.1]{IKru}), every manifold $M_d^n/\ZZ_m$ with $(n,m)=1$ admits an effective transitive action of $\U_n$.  

\begin{example}\label{example}\rm First, we define an action of $\U_n$ on $M_d^n$. Let $\lambda$ be a complex number such that $e^{\frac{2\pi(\lambda-i)}{n}}=d$. Write any element $A\in \U_n$ as $A=e^{it}\cdot B$, where $t\in \RR$ and $B\in \SU_n$, and set
\begin{equation}
A\,[z]:=[e^{\lambda t}\cdot Bz].\label{action}
\end{equation}
To verify that this action is well-defined, consider any other representation of $A$ as above, i.e., $A=e^{i(t+\frac{2\pi k}{n}+2\pi \ell)}\cdot(e^{-\frac{2\pi ik}{n}}B)$ for some $\ell\in\ZZ$, $k\in\{0,\dots,n-1\}$. Then formula (\ref{action}) yields
$$
A[z]=[e^{\lambda(t+\frac{2\pi k}{n}+2\pi \ell)}\cdot e^{-\frac{2\pi ik}{n}}Bz]=
[d^{k+n\ell}e^{\lambda t}\cdot Bz]=[e^{\lambda t}\cdot Bz]
$$
as required.

Next, action (\ref{action}) is transitive and effective. Indeed, to verify its effectiveness, let $(e^{it}\cdot B)[z]=[z]$ for some $t\in\RR$, $B\in \SU_n$ and all
$z\in\CC^n\setminus\{0\}$. Then for some $k\in\{0,\dots,n-1\}$ we have $B=e^{\frac{2\pi ik}{n}}\cdot\hbox{id}$, and there exists $\ell\in\ZZ$ such that $e^{\lambda t}\cdot e^{\frac{2\pi i k}{n}}=d^{\ell}$. Therefore, using the definition of $\lambda$ we obtain
$$
\displaystyle t=\frac{2\pi \ell}{n},\quad e^{\frac{2\pi i k}{n}}=e^{-\frac{2\pi i \ell}{n}},
$$
which implies $e^{it}\cdot B=\hbox{id}$ as claimed.

Now, fix $m\in\NN$ with $(n,m)=1$, consider the quotient $M_d^n/\ZZ_m$ and for $[z]\in M_d^n$ denote by $\{[z]\}\in M_d^n/\ZZ_m$ the equivalence class of $[z]$. We define an action of $\U_n$ on $M_d^n/\ZZ_m$ by the formula  $A\{[z]\}:=\{A[z]\}$ for $A\in \U_n$. This action is clearly transitive. To verify its effectiveness, let $(e^{it}\cdot B)\{[z]\}=\{[z]\}$ for some $t\in\RR$, $B\in \SU_n$ and all $z\in\CC^n\setminus\{0\}$. Then for some $k\in\{0,\dots,n-1\}$ we have $B=e^{\frac{2\pi ik}{n}}\cdot\hbox{id}$, and there exist $\ell\in\ZZ$, $K\in\{0,\dots,m-1\}$  such that $e^{\lambda t}\cdot e^{\frac{2\pi i k}{n}}=e^{\frac{2\pi i K}{m}}d^{\ell}$. Using the definition of $\lambda$ we then see
\begin{equation}
\displaystyle t=\frac{2\pi \ell}{n},\quad e^{\frac{2\pi i k}{n}}=e^{-\frac{2\pi i \ell}{n}+\frac{2\pi i K}{m}}.\label{quotienteffectiveness}
\end{equation}
Since $(n,m)=1$, the second identity in (\ref{quotienteffectiveness}) yields $K=0$, and we obtain $e^{it}\cdot B=\hbox{id}$ as required.
\end{example}

Since the time article \cite{IKru} appeared in print, we have been asked---on numerous occasions---whether there exists a reasonable complete description of all effective transitive actions of $\U_n$ by holomorphic transformations on $M_d^n/\ZZ_m$ where one assumes $(n,m)=1$. In this note we, somewhat belatedly, give a positive answer to this question, which is now more than ten years old. 

Our results are summarized as follows. First, we obtain:

\begin{theorem}\label{hopf} If for some $m\in \NN$ the quotient $M_d^n/\ZZ_m$ admits an effective action of $\U_n$ by holomorphic transformations, then $(n,m)=1$. Further, every effective transitive action of $\U_n$ on $M_d^n/\ZZ_m$ by holomorphic transformations has either the form
\begin{equation}
A\{[z]\}=\left\{\left[e^{i\left(1+(p+\frac{q}{m})n\right)t}d^{\frac{nrt}{2\pi}}CBC^{-1}z\right]\right\}\label{type1}
\end{equation}
or the form
\begin{equation}
A\{[z]\}=\left\{\left[e^{i\left(-1+(p+\frac{q}{m})n\right)t}d^{\frac{nrt}{2\pi}}C\overline{B}C^{-1}z\right]\right\},\label{type2}
\end{equation}
where $A\in \U_n$ is represented as $A=e^{it}\cdot B$ with $t\in\RR$
and $B\in \SU_n$, $p,q,r\in\ZZ$, $r\ne 0$, $C\in \GL_n(\CC)$, and for
any $\mu\in\RR$ we set $d^{\mu}:=|d|^{\mu}e^{i\mu\hbox{\tiny arg}\,d}$. 
\end{theorem}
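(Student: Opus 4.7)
My strategy is to pass to the universal cover $\CC^n\setminus\{0\}$ (simply connected for $n\ge 2$), where the geometry reduces to linear algebra governed by the deck group $\Gamma=\langle z\mapsto dz,\, z\mapsto e^{2\pi i/m}z\rangle$. Any holomorphic $\U_n$-action on $M_d^n/\ZZ_m$ lifts to an action of the universal cover $\widetilde{\U}_n=\SU_n\times\RR$ on $\CC^n\setminus\{0\}$; the covering homomorphism is $(B,t)\mapsto e^{it}B$, with kernel $N$ generated by $(e^{-2\pi i/n}\mathrm{id},\,2\pi/n)$. By Hartogs ($n\ge 2$) every lifted transformation extends across the origin to a biholomorphism of $\CC^n$ fixing $0$, and normalizes $\Gamma$ because the whole $\widetilde{\U}_n$-action descends to $X=M_d^n/\ZZ_m$.

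The key observation is that any biholomorphism $f$ of $\CC^n$ with $f(0)=0$ normalizing $\Gamma$ must be linear: conjugation with $z\mapsto dz$ gives $f(dz)=cf(z)$ for some $c\in\CC^*$, and expanding $f$ in Taylor series shows each nonvanishing homogeneous component has a single degree $k$ with $d^{k}=c$; since $|d|\neq 1$ and $f$ is invertible, only $k=1$ survives. So $\widetilde{\U}_n$ acts on $\CC^n$ by linear maps. Consequently $\SU_n$ acts via a representation $\rho\co\SU_n\to\GL_n(\CC)$; $\rho$ cannot be trivial (effectiveness of $\U_n$ would fail), and any nontrivial $n$-dimensional complex representation of $\SU_n$ is faithful, because the smallest nontrivial representation of $\mathrm{PSU}_n$ has dimension $n^2-1$. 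Thus $\rho$ is equivalent to either the defining representation or its complex conjugate, and there exists $C\in\GL_n(\CC)$ with $\rho(B)=CBC^{-1}$ or $\rho(B)=C\overline{B}C^{-1}$. The $\RR$-action is linear and commutes with the irreducible $\rho$, so by Schur's lemma and continuity $t\cdot z=e^{\alpha t}z$ for some $\alpha\in\CC$.

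Descent of the $\widetilde{\U}_n$-action to $X$ requires $N$ to act by elements of $\Gamma$. Applying this to the generator of $N$ yields $e^{(2\pi/n)(\alpha\mp i)}=d^{\,r}e^{2\pi iq/m}$ for integers $r\in\ZZ$ and $q\in\{0,\dots,m-1\}$, with the sign depending on the choice of $\rho$. Solving gives $\alpha=i\bigl[\pm 1+(p+q/m)n\bigr]+(rn/(2\pi))\ln d$ for some $p\in\ZZ$, and substitution reproduces precisely (\ref{type1}) and (\ref{type2}). Effectiveness demands that the preimage of $\Gamma$ in $\widetilde{\U}_n$ equal $N$. A direct parameterisation shows this preimage consists of pairs $(e^{2\pi i\tilde j/n}\mathrm{id},\,2\pi k/(rn))$ satisfying $rn\mid M(k+r\tilde j)$ with $M=m(1+pn)+qn$, and this coincides with $N$ exactly when $\gcd(M,rn)=1$. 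The modulus condition $|d|^{rnt/(2\pi)}=|d|^k$ forces $r\neq 0$ (otherwise descent is incompatible with $|d|\neq 1$), and since $\gcd(M,n)=\gcd(m(1+pn)+qn,n)=\gcd(m,n)$, we conclude $(n,m)=1$.

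The main obstacle will be the last step: cleanly disentangling the several integer ambiguities---the branch of $\log d$, the cosets in $\ZZ_m$, and the centre $Z(\SU_n)$---so that the effectiveness criterion extracts $(n,m)=1$ without spurious constraints on $p$, $q$, $r$. The preceding reduction to linear maps, made possible by the $\Gamma$-normalization argument together with the rigidity of $n$-dimensional $\SU_n$-representations, keeps the rest of the analysis elementary.
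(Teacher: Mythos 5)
Your route---lifting to the universal cover $\CC^n\setminus\{0\}$, linearizing via Hartogs extension plus normalization of the deck group $\Gamma$, and then using the rigidity of $n$-dimensional representations of $\SU_n$ together with Schur's lemma for the central $\RR$-factor---is organized quite differently from the paper's, which works inside $\Aut(M_d^n/\ZZ_m)\cong(\GL_n(\CC)/H)/\ZZ_m$ (quoted from \cite{IKru}) and locates the image of $\U_n$ inside a maximal compact subgroup. Your linearization step is correct and in effect reproves the description of the automorphism group that the paper imports, so your argument is more self-contained; the descent condition on the generator $(e^{-2\pi i/n}\mathrm{id},2\pi/n)$ of $N$ does yield $\alpha=i(\pm1+(p+\frac{q}{m})n)+\frac{rn}{2\pi}\ln d$ and hence (\ref{type1})--(\ref{type2}), and the necessity of $(n,m)=1$ already follows from the $t=0$ part of your effectiveness analysis (if $g=\gcd(n,m)>1$, the nontrivial central element $e^{2\pi i/g}\cdot\mathrm{id}$ acts on the cover by the scalar $e^{\pm 2\pi i/g}=e^{\pm 2\pi i(m/g)/m}\in\Gamma$, hence trivially on the quotient), independently of $r$. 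One imprecision: to conclude that a nontrivial $n$-dimensional representation of $\SU_n$ is the standard one or its conjugate, it is not enough to rule out kernels equal to the full center---for composite $n$ the kernel could a priori be a proper central subgroup; what you actually need is that the only irreducible representations of $\SU_n$ of dimension at most $n$ are the trivial one, $V$ and $V^*$, so that any nontrivial $n$-dimensional representation is forced to be irreducible and equal to $V$ or $V^*$.

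The genuine gap is your justification of $r\ne 0$. You assert that for $r=0$ ``descent is incompatible with $|d|\neq 1$,'' but this is false: with $r=0$ one has $\alpha=i(1+(p+\frac{q}{m})n)$, the generator of $N$ acts by the unimodular scalar $e^{2\pi iq/m}\in\Gamma$, and the action descends perfectly well---indeed it can be effective (the standard action $A[z]=[Az]$ on $M_d^n$ is the case $p=q=r=0$, $C=\mathrm{id}$, $m=1$, and is effective). What excludes $r=0$ is transitivity, which your argument never invokes at this point: if $r=0$ then $\Re\alpha=0$, so the lifted action preserves every level set of $z\mapsto|C^{-1}z|$, and the nonconstant $\U_n$-invariant function $\{[z]\}\mapsto |C^{-1}z| \bmod |d|^{\ZZ}$ on the compact quotient contradicts transitivity. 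This is precisely where the paper appeals to the transitivity hypothesis, and your proof needs the same (one-line) repair; as written, the claim $r\ne0$ in the theorem is unproved.
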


\begin{remark}\label{dimtwo}\rm For $n=2$ every action of the form
  (\ref{type2}) is in fact an action of the form (\ref{type1}) since complex conjugation $B\mapsto\overline{B}$ is an inner automorphism of $\SU_2$; it coincides with the conjugation by the element $\begin{pmatrix} 0 & i\\ i & 0 \end{pmatrix}$. 
\end{remark}

\begin{remark}\label{power} In the statement of Theorem \ref{hopf} we chose to define $d^{\mu}$ for $\mu\in\RR$ as $d^{\mu}:=|d|^{\mu}e^{i\mu\hbox{\tiny arg}\,d}$, where $\hbox{arg}$ is the ordinary argument function taking values in the semiopen interval $[0,2\pi)$. Any other possible definition of $d^{\mu}$ differs from ours by a factor of the form $e^{2\pi i\mu L}$, where $L\in\ZZ$ is independent of $\mu$. Notice that for any such alternative definition of $d^{\mu}$ formulas (\ref{type1}), (\ref{type2}) remain valid with $p$ replaced by $p-Lr$.       
\end{remark}

Observe that in Theorem \ref{hopf} we do not claim that either of formulas (\ref{type1}), (\ref{type2}) always defines an effective action despite the fact that these formulas were obtained under the effectiveness assumption. Necessary and sufficient conditions on the integers $p,q,r$ that guarantee effectiveness are given below:

\begin{proposition}\label{fin} We have:

\noindent {\rm (i)} action {\rm (\ref{type1})} is effective if and only if
there do not exist
$\ell\in\ZZ$, $K\in\{0,\dots,m-1\}$ such that the following conditions are satisfied:
$$
\begin{array}{ll}
{\rm (a)} & \displaystyle \frac{\ell}{r}\left(1+\left(p+\frac{q}{m}\right)n\right)-\frac{nK}{m}\in\ZZ,\\
\vspace{-0.1cm}\\
{\rm (b)} & \displaystyle \frac{\ell}{r}\left(p+\frac{q}{m}\right)-\frac{K}{m}\not\in\ZZ;
\end{array}
$$

\noindent {\rm (ii)} action {\rm (\ref{type2})} is effective if and only
if there do not exist
$\ell\in\ZZ$, $K\in\{0,\dots,m-1\}$ such that {\rm (b)} and the following condition are satisfied:
$$
\begin{array}{ll}
{\rm (c)} & \displaystyle \frac{\ell}{r}\left(-1+\left(p+\frac{q}{m}\right)n\right)-\frac{nK}{m}\in\ZZ.
\end{array}
$$
\end{proposition}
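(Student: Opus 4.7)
The plan is to treat both parts in parallel by analyzing, for an element $A = e^{it}\cdot B$ (with $t\in\RR$, $B\in\SU_n$), when $A$ fixes every $\{[z]\}\in M_d^n/\ZZ_m$ under the relevant action, and then comparing with the condition $A=\mathrm{id}$.

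For action (\ref{type1}), I would first rewrite the equation $A\{[z]\}=\{[z]\}$ as the vector identity
$$e^{i\left(1+(p+\frac{q}{m})n\right)t}\,d^{\frac{nrt}{2\pi}}\,CBC^{-1}z \;=\; e^{\frac{2\pi iK}{m}}\,d^{\ell}\,z$$
for suitable $K\in\{0,\dots,m-1\}$, $\ell\in\ZZ$ (a priori depending on $z$). Since the right-hand side is a scalar multiple of $z$, every nonzero $z$ is an eigenvector of $CBC^{-1}$; hence $CBC^{-1}$ is a scalar matrix, and since $B\in\SU_n$ this forces $B=e^{\frac{2\pi ik}{n}}\cdot\mathrm{id}$ for some $k\in\{0,\dots,n-1\}$, with $K$, $\ell$ then independent of $z$. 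Next, taking moduli (using $|d|\ne 1$ and $r\ne 0$) yields $t=2\pi\ell/(nr)$; substituting this value of $t$ into the argument equation and multiplying through by $n$ gives
$$\frac{\ell}{r}\left(1+\left(p+\frac{q}{m}\right)n\right)-\frac{nK}{m}\;\in\;-k+n\ZZ,$$
which, as $k$ ranges over $\{0,\dots,n-1\}$, is precisely (a); conversely, (a) together with this prescription for $t$ and $k$ yields an $A$ fixing every $\{[z]\}$.

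The next step is to check when such an $A$ is nontrivial: $A=\mathrm{id}$ iff $(t+2\pi k/n)/(2\pi)\in\ZZ$, i.e., $(\ell+kr)/(nr)\in\ZZ$. Dividing the previously displayed integer identity by $n$ and rearranging yields
$$\frac{\ell}{r}\left(p+\frac{q}{m}\right)-\frac{K}{m}+\frac{\ell+kr}{nr}\;\in\;\ZZ,$$
so $A\ne\mathrm{id}$ is equivalent to (b). Combining the two steps proves (i): the action fails to be effective iff $\ell$ and $K$ satisfying both (a) and (b) exist.

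Part (ii) will follow by an identical analysis of action (\ref{type2}): since complex conjugation is an automorphism of $\SU_n$, the argument that $C\overline{B}C^{-1}$ is scalar still forces $B=e^{\frac{2\pi ik}{n}}\cdot\mathrm{id}$, hence $\overline{B}=e^{-\frac{2\pi ik}{n}}\cdot\mathrm{id}$; the modulus step is unchanged, and the two sign flips (in the scalar exponent and in $\overline{B}$) combine to replace (a) by (c), while the triviality test for $A$ depends only on $A$ itself and so yields (b) unchanged. The main obstacle is the bookkeeping of the integer parameters in the middle step: extracting the clean conditions (a), (b), (c) from the raw eigenvalue identity requires careful algebraic rearrangement, and one should also verify that the ambiguity in defining $d^{\mu}$ noted in Remark~\ref{power} shifts (a)--(c) by integers and is therefore harmless.
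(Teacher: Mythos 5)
Your proposal is correct and follows essentially the same route as the paper: reduce to scalar $B=e^{\frac{2\pi ik}{n}}\cdot\mathrm{id}$, extract $t=\frac{2\pi\ell}{nr}$ from the modulus (using $|d|\ne 1$, $r\ne 0$), read off (a) (resp.\ (c)) from the argument identity, and identify (b) as the condition that $A$ be nontrivial. The only difference is that you spell out explicitly the algebraic equivalence between $A\ne\mathrm{id}$ and (b), which the paper asserts without displaying the rearrangement.
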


Finally, as one can see from the following corollary, the formulas from Theorem \ref{hopf} and the conditions from
Proposition \ref{fin} significantly simplify in the case of Hopf manifolds (i.e., when $m=1$):

\begin{corollary}\label{purehopf} Every effective transitive action of $\U_n$ on $M_d^n$ by holomorphic transformations has either the form
\begin{equation}
A[z]=\left[e^{i\left(1+pn\right)t}d^{\frac{nrt}{2\pi}}CBC^{-1}z\right]\label{type11}
\end{equation}
or the form 
\begin{equation}
A[z]=\left[e^{i\left(-1+pn\right)t}d^{\frac{nrt}{2\pi}}C\overline{B}C^{-1}z\right],\label{type22}
\end{equation}
where $A\in \U_n$ is represented as $A=e^{it}\cdot B$ with $t\in\RR$
and $B\in \SU_n$, $p,r\in\ZZ$, $r\ne 0$, $C\in \GL_n(\CC)$. Furthermore, action {\rm (\ref{type11})} is effective if and only if there does not
exist $\ell\in\ZZ$ such that $r$ divides $\ell(1+pn)$ but does not divide $\ell p$.
Action {\rm (\ref{type22})} is effective if and only if there does not
exist $\ell\in\ZZ$ such that $r$ divides $\ell(-1+pn)$ but does not divide
$\ell p$.
\end{corollary}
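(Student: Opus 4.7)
The plan is to derive Corollary \ref{purehopf} as a direct specialization of Theorem \ref{hopf} and Proposition \ref{fin} to the case $m=1$, noting that $M_d^n/\ZZ_1=M_d^n$ and that the equivalence class notation $\{[z]\}$ reduces to $[z]$.

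First I would apply Theorem \ref{hopf} with $m=1$. In this situation the quantity $p+\frac{q}{m}=p+q$ is an integer, and as $p,q$ range independently over $\ZZ$, so does $p+q$. Consequently, formulas (\ref{type1}) and (\ref{type2}) take, respectively, the forms (\ref{type11}) and (\ref{type22}) after renaming $p+q$ as $p$; conversely, every action of the form (\ref{type11}) or (\ref{type22}) arises from (\ref{type1}) or (\ref{type2}) with $q=0$.

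Next I would turn to the effectiveness criterion. Specializing Proposition \ref{fin} to $m=1$, the index $K$ is forced to equal $0$ since $K\in\{0,\dots,m-1\}=\{0\}$, and the terms $\frac{nK}{m}$ and $\frac{K}{m}$ vanish. Consequently condition (a) collapses to $\frac{\ell(1+pn)}{r}\in\ZZ$, condition (c) collapses to $\frac{\ell(-1+pn)}{r}\in\ZZ$, and condition (b) becomes $\frac{\ell p}{r}\notin\ZZ$. Translating these into divisibility statements yields precisely the conditions stated in the corollary: action (\ref{type11}) fails to be effective exactly when there is $\ell\in\ZZ$ with $r\mid\ell(1+pn)$ but $r\nmid\ell p$, and similarly for (\ref{type22}) with $1+pn$ replaced by $-1+pn$.

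Since the corollary is purely a transcription and simplification of the preceding two results, no real obstacle arises; the only point requiring minor care is the observation that the parameter relabelling $p+q\mapsto p$ is a bijection of $\ZZ\times\ZZ$ onto $\ZZ$ when considering actions up to the form of the formula, and that Remark \ref{power} ensures the definition of $d^\mu$ used in the corollary is consistent with that of Theorem \ref{hopf}.
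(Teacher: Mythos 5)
Your proposal is correct and is exactly the intended derivation: the paper presents the corollary as the $m=1$ specialization of Theorem \ref{hopf} and Proposition \ref{fin}, and your relabelling $p+q\mapsto p$ together with the observation that $K$ is forced to be $0$ reproduces the stated formulas and divisibility criteria. No further comment is needed.
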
 

\noindent It is an easy exercise to show that action (\ref{action}) from Example \ref{example} can be written in the form (\ref{type11}) with $C=\hbox{id}$, some $p\in\ZZ$ and $r=1$.

\section{The proofs}\label{results}
\setcounter{equation}{0}

\begin{proof}[Proof of Theorem {\rm \ref{hopf}}] Recall that for any connected compact complex manifold $X$ the group $\Aut(X)$ of its holomorphic automorphisms is a complex Lie group in the compact-open topology (see \cite{BM}). As noted in the proof of Theorem 4.5 in
\cite{IKru}, the complex Lie group $\Aut(M_d^n/\ZZ_m)$ is naturally isomorphic to the complex Lie group ${\mathcal G}:=(\GL_n(\CC)/H)/\ZZ_m$, where $H:=\{d^k\cdot\hbox{id},\,k\in\ZZ\}\subset\GL_n(\CC)$ and $\ZZ_m$ is identified with the subgroup of $\GL_n(\CC)/H$ that consists of all elements of the form $e^{\frac{2\pi i K}{m}}H$, $K\in\{0,\dots,m-1\}$. 

We will now find the maximal compact subgroups of ${\mathcal G}$. First, consider\linebreak $G:=\{d^t\cdot\hbox{id},t\in\RR\}\subset \GL_n(\CC)$. Then the subgroup ${\mathbf K}:=G/H\subset\GL_n(\CC)/H$ is isomorphic to $S^1$ (with the isomorphism given by $(d^t\cdot\hbox{id})H\mapsto e^{2\pi it}$). We view ${\mathbf K}$ as a subgroup of ${\mathcal G}$ by way of the embedding $g\mapsto g\ZZ_m$, $g\in {\mathbf K}$. Further, taking into account the natural embedding of $\U_n$ in $\GL_n(\CC)/H$ by the map $g\mapsto gH$, $g\in \U_n$, we conclude that $(\U_n/\ZZ_m)\cdot {\mathbf K}$ is a maximal compact subgroup of ${\mathcal G}$, where we notice that $(\U_n/\ZZ_m)\cap {\mathbf K}=\{e\}$. Since ${\mathbf K}$ lies in the center of ${\mathcal G}$, any other maximal compact subgroup of ${\mathcal G}$ has the form $s_0(\U_n/\ZZ_m)s_0^{-1}\cdot {\mathbf K}$ for some $s_0\in{\mathcal G}$.

Suppose now that we are given an effective action of $\U_n$ on
$M_d^n/\ZZ_m$ by holomorphic transformations. Clearly, the action
induces an embedding $\tau: \U_n\ra {\mathcal G}$. Since $\tau(\U_n)$ is a
compact subgroup of ${\mathcal G}$, we have $\tau(\U_n)\subset
s_0(\U_n/\ZZ_m)s_0^{-1}\cdot {\mathbf K}$ for some $s_0\in {\mathcal G}$. Consider
the restriction of $\tau$ to $\SU_n$. Since there does not exist a
nontrivial homomorphism of $\SU_n$ to $S^1$ (which follows, e.g., from \cite[Lemma 2.1]{IKra}), we have $\tau(\SU_n)\subset
s_0(\U_n/\ZZ_m)s_0^{-1}$. As the action is effective, $\tau(\SU_n)$
is isomorphic to $\SU_n$. Clearly, $s_0(\U_n/\ZZ_m)s_0^{-1}$
contains a subgroup isomorphic to $\SU_n$ if and only if $(n,m)=1$, 
in which case $\tau(\SU_n)=s_0\SU_ns_0^{-1}$, where in the right-hand side $\SU_n$ is
embedded in $ {\mathcal G}$ in the standard way. Hence, there exists an
automorphism $\gamma$ of $\SU_n$ and $D\in \GL_n(\CC)$
such that $B\{[z]\}=\{[D\gamma(B)D^{-1}z]\}$ for all
$z\in\CC^n\setminus\{0\}$ and $B\in \SU_n$. 

Next, every automorphism of $\SU_n$ has either the form
\begin{equation}
B\mapsto h_0Bh_0^{-1}\label{autoform1}
\end{equation}
or the form
\begin{equation}
B\mapsto h_0\overline{B}h_0^{-1}\label{autoform2}
\end{equation}
for some $h_0\in \SU_n$ (cf.~Remark \ref{dimtwo}). If $\gamma$ has
the form (\ref{autoform1}), then there exists $C\in \GL_n(\CC)$ such
that
\begin{equation}
B\{[z]\}=\{[CBC^{-1}z]\}\label{type1su}
\end{equation}
for all
$z\in\CC^n\setminus\{0\}$ and $B\in \SU_n$. If $\gamma$ has
the form (\ref{autoform2}), then there exists\linebreak $C\in \GL_n(\CC)$ such
that
\begin{equation}
B\{[z]\}=\{[C\overline{B}C^{-1}z]\}\label{type2su}
\end{equation}
for all
$z\in\CC^n\setminus\{0\}$ and $B\in \SU_n$.

Suppose first that $\SU_n$ acts as in (\ref{type1su}). Restrict $\tau$ to the center $Z$ of
$\U_n$. Composing $\tau$ with the projections of the group $s_0(\U_n/\ZZ_m)s_0^{-1}\cdot {\mathbf K}$ to its first and second factors, we see that there exist homomorphisms $\tau_1:Z\ra s_0(Z/\ZZ_m)s_0^{-1}=Z/\ZZ_m$ and $\tau_2: Z\ra {\mathbf K}$ such that $\tau(g)=\tau_1(g)\cdot\tau_2(g)$ for all
$g\in Z$. Clearly, we have $\tau_1(e^{it}\cdot\hbox{id})=((e^{i\sigma t}\cdot\hbox{id})H)\ZZ_m$ for some $\sigma\in\RR$. 
Further, there exists $\mu\in\RR$ such that
$\tau_2(e^{it}\cdot\hbox{id})=(d^{\mu t}H)\ZZ_m$. Since
$\tau_2(e^{it}\cdot\hbox{id})=\tau_2(e^{i(t+2\pi)}\cdot\hbox{id})$,
the number $\mu$ has to be of the form $\mu=\frac{L}{2\pi}$ for some
$L\in\ZZ$. Further, since by (\ref{type1su}) the map $\tau_2$ is trivial on the center of $\SU_n$,
we obtain that $L=nr$ for some $r\in\ZZ$. Also, as the action of $\U_n$ on $M_d^n/\ZZ_m$ is transitive, it follows that $r\ne 0$.

Next, write any $A\in \U_n$ in the form $A=e^{it}\cdot B$, where
$t\in\RR$, $B\in \SU_n$. Then for every $z\in\CC^n\setminus\{0\}$ we
have
\begin{equation}
\begin{array}{l}
A\{[z]\}=(e^{it}\cdot B)\{[z]\}=e^{it}(B\{[z]\})=\\
\vspace{-0.3cm}\\
e^{it}\{[CBC^{-1}z]\}=
\left\{\left[e^{i\sigma
  t}d^{\frac{nrt}{2\pi}}CBC^{-1}z\right]\right\}.\label{sig}
\end{array}
\end{equation}
Representing $A$ as $A=e^{i(t+\frac{2\pi k}{n}+2\pi \ell)}\cdot
(e^{-\frac{2\pi i k}{n}}\cdot B)$ with $\ell\in\ZZ$, $k\in\{0,\dots,n-1\}$, we obtain from (\ref{sig}) that
$\sigma$ must be of the form $\sigma=1+(p+\frac{q}{m})n$ for some
$p,q\in\ZZ$. This yields (\ref{type1}).

Similarly, the case when $\SU_n$ acts as in (\ref{type2su}) leads to (\ref{type2}). \end{proof}

\begin{proof}[Proof of Proposition {\rm \ref{fin}}] We start by considering action (\ref{type1}) and suppose first that it is not effective. Then for a nontrivial element $A=e^{it}\cdot B$, $t\in\RR$, $B\in \SU_n$ one has $A\{[z]\}=\{[z]\}$ for all $z\in\CC^n\setminus\{0\}$. Hence, for some $k\in\{0,\dots,n-1\}$ we have $B=e^{\frac{2\pi i k}{n}}\cdot\hbox{id}$, and there exist $\ell\in\ZZ$, $K\in\{0,\dots,m-1\}$ such that
$$
e^{i\left(1+(p+\frac{q}{m})n\right)t}d^{\frac{nrt}{2\pi}}e^{\frac{2\pi i k}{n}}=e^{\frac{2\pi i K}{m}}d^{\ell}.
$$
This yields
\begin{equation}
\begin{array}{l}
\displaystyle t=\frac{2\pi \ell}{nr},\\
\vspace{-0.3cm}\\
\displaystyle k=nL-\frac{\ell}{r}\left(1+\left(p+\frac{q}{m}\right)n\right)+\frac{nK}{m}
\end{array}\label{third}
\end{equation}
for some $L\in\ZZ$. Since $A$ is nontrivial, we have $\frac{\ell}{r}(p+\frac{q}{m})-\frac{K}{m}\not\in\ZZ$, which is
condition (b). Condition (a) follows from the second identity in (\ref{third}).

Conversely, suppose that for some $\ell\in\ZZ$, $K\in\{0,\dots,m-1\}$ conditions (a) and (b) hold. Define $t$ as in the first equation in (\ref{third}). Next, it follows from condition (a) that the right-hand side of the second equation in (\ref{third}) is an integer, thus we define $k$ by this equation with $L\in\ZZ$ chosen arbitrarily. From condition (b) we then immediately see that $A:=e^{i(t+\frac{2\pi  k}{n})}\cdot\hbox{id}$ is nontrivial. On the other hand, it is also easy to check that $A$ acts on $M_d^n$ trivially. Thus, the action of $\U_n$ on $M_d^n/\ZZ_m$ is not effective.   

The proof in the case of action (\ref{type2}) is analogous to the proof above.\end{proof}

\end{document}